\newtheorem{thm}[subsection]{Theorem}
\newtheorem{defn}[subsection]{Definition}
\newtheorem{lemma}[subsection]{Lemma}
\theoremstyle{definition}
\newtheorem{rmk}[subsection]{Remark}
\begin{document}

\title[Embeddings of Quadratic Spaces]{Embeddings of Quadratic Spaces}

\author{Vineeth Chintala}

\address{Indian Institute of Technology, Bombay, India}
\email{vineethreddy90@gmail.com}

\vspace*{-1cm}
\maketitle

\section {{{Introduction}}}

In this paper, we examine the concept of an \emph{embedding} of a quadratic space and analyze its connection to Spin groups.

The special case of the Hyperbolic space has been studied in ~\cite{CV} using what are called Suslin Matrices. This paper generalizes the results of ~\cite{CV} to any quadratic space, 
while at the same time giving a simpler ``basis-free" route to the main theorem (using Lemmas ~\ref{lemma1} - ~\ref{main}). These lemmas, as we shall see, help characterize a norm function and capture the Spin representation (Theorem ~\ref{main2}).
\vskip 1mm
By a quadratic space $(V,q)$,
we mean a free module $V$, over a commutative ring, equipped with a quadratic form $q.$ Without further ado,

\vskip 1mm
\begin{defn}
 Let $(V,q)$ be a quadratic space and $A$ be an associative algebra. We will say that $(V,q)$ is embedded in $A$ if $V \subseteq A$ and $$q(v) = v\alpha(v) = \alpha(v)v$$ 
where $\alpha$ is an isometry of $(V,q)$.
\end{defn}

Familiar examples of embeddings are given by the Composition algebras, Clifford Algebras and Suslin matrices. 
Given an embedding of a quadratic space, we connect it to the Clifford Algebra and derive some fundamental properties that any embedding should satisfy. Conversely, we can describe the Clifford Algebra and the corresponding Spin groups in terms of the algebra in which the quadratic space is embedded. It turns out that when there is an involution of $A$ that acts trivially on the underlying quadratic space $(V,q)$, the Spin group acts faithfully. 

The connection between the Clifford Algebra $Cl(V,q)$ and a general embedding $V \subseteq A$ is made by interpreting $Cl$ as a subalgebra of $M_2(A)$. 
This is achieved with the help of a structure theorem, about the graded ideals in Clifford algebras (Theorem ~\ref{injectivity}), that holds for any non-degenerate quadratic space.

Though Clifford Algebras have been studied in detail, they may not always be easy to work with. Sometimes it might be useful to switch to a more concrete embedding (as in the case of Suslin Matrices) to study low dimensional Spin and Epin (or Elementary Spin) groups. For instance, one can easily compute using Suslin Matrices, the exceptional isomorphisms $Spin(H(R^3)) \cong Sl_4(R)$ (Theorem 7.1, ~\cite{CV}) and $Epin(H(R^3)) \cong E_4(R)$ (Theorem 8.4, ~\cite{CV}). We conclude this paper with a brief description of Suslin matrices and use them to give an explicit set of generators of the Clifford algebras of some quadratic spaces.

\vskip 4mm

\subsection{Notation} 
Let $R$ be a commutative ring and $V$ be a free $R$-module with basis $\{v_1,\cdots v_n\}$. In addition, $V$ is equipped with a quadratic form $q$.
Then there is a bilinear form associated to $(V,q)$, given by $\langle v,w\rangle = q(v+w) - q(v) - q(w)$, for $v,w \in V$. 

\vskip 1mm
We say that $(V,q)$ is \emph{non-singular} 
if the matrix corresponding to the bilinear form, $B = \bigl(\langle v_i,v_j\rangle \bigr)$ is invertible.

\vskip 1mm
 A quadratic space $(V,q)$ is said to be \emph{non-degenerate} when $\langle x,v\rangle =0 $ holds for all $v \in V$, 
if and only if $x=0$.

\vskip 1mm

Suppose $(V,q)$ is degenerate. Then there exists a $x\in V$, $x\neq 0$ such that $\langle x,v_i\rangle =0 $ for $i \in \{1,\cdots,n\}$. Writing $x =(x_1,\cdots,x_n)$, 
this is equivalent to saying that there is a non-trivial solution to the matrix equation $xB =0$, where $B = \bigl(\langle v_i,v_j\rangle \bigr)$.
When $R$ is a commutative ring, the matrix equation 
$xB = 0$ has a non-trivial solution if and only if the determinant of $B = \bigl(\langle v_i,v_j\rangle \bigr)$ is a zero divisor 
(~\cite{MD}, Corollary I.30.). 
In other words, $(V,q)$ is non-degenerate if and only if $det\bigl(\langle v_i,v_j\rangle \bigr)$ is a non-zero divisor.
\vskip 1mm

Unless otherwise stated, all quadratic spaces in the paper are assumed to be non-degenerate. 
All modules considered are free-modules over a commutative ring $R$.

 \subsection{General References}
 For general literature on Clifford algebras and Spin groups over a commutative ring, the reader is referred to
 ~\cite{B2}, ~\cite{K}. To learn more about Suslin Matrices, see ~\cite{RJ} and Chapter III.7, ~\cite{L}.

\section{{Preliminaries on Clifford Algebras}}

Perhaps the most important example of an embedding of a quadratic space is given by the Clifford algebra. Given any quadratic space $(V,q)$, its Clifford algebra $Cl(V,q)$ (or simply, $Cl$) is the ``freest" algebra generated by $V $ subject to the condition 
$x^2 = q(x)$ for all $x \in V$. 

More precisely, the Clifford algebra $Cl(V,q)$ is the quotient of the tensor algebra $$T(V) = R \oplus V\oplus V^{\otimes 2} \oplus \cdots \oplus V^{\otimes n} \oplus \cdots$$ 
by the two sided ideal $I(V,q)$ generated by all $x\otimes x - q(x)$ with $x \in V$. 
\vskip 4mm

\subsection {Basic Properties of the Clifford Algebra $Cl(V,q)$ :}

\begin{itemize}
\vskip 3mm
\item \underline{$Z_2$ Grading of $Cl$} :  The Clifford algebra $Cl(V,q)$ is an associative algebra (with unity) over $R$  with a linear map 
$i : V \rightarrow Cl(V,q)$ such that $i(x)^2 = q(x)$. The terms $x\otimes x$ and $q(x)$ appearing in the generators of $I(V,q)$ have degrees $0$ and $2$ in the grading of $T(V)$. 
By grading $T(V)$ modulo $2$ by even and odd degrees, it follows that the Clifford algebra has a $Z_2$-grading $Cl =  Cl_0\oplus Cl_1$ such that $V \subseteq Cl_1$ and $Cl_iCl_j\subseteq Cl_{i+j}$ ($i,j$ mod $2$). 
\vskip 2mm

\item \underline{Universal Property}: The Clifford algebras has the following universal property. Given any associative algebra $A$ over $R$ and any linear map 
$j : V \rightarrow A$ such that 
$$ j(x)^2 = q(x) \text{ for all $x \in V$},$$
there is a unique algebra homomorphism $f : Cl(V,q) \rightarrow A$ such that 
$f \circ i = j$.
 \vskip 1mm

\item  \underline{Basis of $Cl$}: The elements of $V$ generate the Clifford algebra. Furthermore, the following result implies that if $rank(V) =n$, then $rank(Cl) = 2^n$. \begin{thm}(Poincar\'e-Birkhoff-Witt)
 \\Let $\{v_1,\cdots, v_n\}$ be a basis of $(V,q)$. Then $\{v_1^{e_1}\cdots v_n^{e_n} : e_i =0,1\}$ is a basis of $Cl(V,q)$. 
  \end{thm}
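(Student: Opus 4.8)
The plan is the classical two-step argument: first show that the $2^n$ monomials $v_1^{e_1}\cdots v_n^{e_n}$ span $Cl(V,q)$ over $R$, then show they are linearly independent, the latter by exhibiting a sufficiently rich representation of $Cl(V,q)$ on the exterior algebra of $V$. Spanning is the easy half: since $i(V)$ generates $Cl(V,q)$ as an $R$-algebra, the products $v_{j_1}\cdots v_{j_m}$ (repetitions allowed, arbitrary order) span, and polarizing $x^2 = q(x)$ at $x = v_i + v_j$ yields $v_iv_j + v_jv_i = \langle v_i,v_j\rangle$ together with $v_i^2 = q(v_i)$ inside $Cl(V,q)$. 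The first relation lets one transpose two adjacent out-of-order generators modulo a strictly shorter word and the second deletes a repetition modulo a scalar, so an induction on word length rewrites every product as an $R$-combination of the standard monomials; hence those $2^n$ elements span.

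For linear independence I would construct an explicit action of $V$ on $M := \Lambda(V)$ --- or, to avoid presupposing that $\Lambda(V)$ is free of rank $2^n$, on the free $R$-module $M$ with basis $\{e_S : S \subseteq \{1,\dots,n\}\}$, with the operators below written out on the $e_S$. The key device for a general commutative ring $R$ is to choose a (usually non-symmetric) bilinear form $b$ on $V$ with $b(v_i,v_i) = q(v_i)$, $b(v_i,v_j) = \langle v_i,v_j\rangle$ for $i<j$, and $b(v_i,v_j) = 0$ for $i>j$; a direct check gives $b(v,v) = q(v)$ for all $v \in V$. For $v \in V$ put $\rho(v) := \varepsilon(v) + d_v$, where $\varepsilon(v)$ is left exterior multiplication by $v$ and $d_v$ is the unique degree $-1$ antiderivation of $\Lambda(V)$ with $d_v(w) = b(v,w)$ for $w \in V$. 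One checks $\varepsilon(v)^2 = 0$, that $d_v^2$ is a degree $-2$ derivation annihilating the generators so $d_v^2 = 0$, and that $\varepsilon(v)d_w + d_w\varepsilon(v) = b(w,v)\,\mathrm{id}_M$; hence $\rho(v)^2 = b(v,v)\,\mathrm{id}_M = q(v)\,\mathrm{id}_M$. By the universal property of $Cl(V,q)$, $\rho$ extends to an $R$-algebra homomorphism $\tilde\rho \colon Cl(V,q) \to \operatorname{End}_R(M)$.

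To finish I would examine $\Phi \colon Cl(V,q) \to M$, $\Phi(x) := \tilde\rho(x)(1)$. By induction on $|S|$ one shows that $\Phi$ sends the standard monomial $m_S := v_{i_1}\cdots v_{i_k}$ (with $i_1 < \cdots < i_k$ and $S = \{i_1,\dots,i_k\}$) to $e_S$ plus an $R$-combination of $e_{S'}$ with $|S'| < |S|$: the base case is $\Phi(1) = 1 = e_\emptyset$, and in the inductive step $\Phi(m_S) = \rho(v_{i_1})\bigl(\Phi(m_{S\setminus\{i_1\}})\bigr)$, where $\varepsilon(v_{i_1})$ carries the leading term to $e_S$ with no sign (as $i_1$ is the least index) while $d_{v_{i_1}}$ and the lower-order terms contribute only $e_{S'}$ with $|S'| < |S|$. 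Ordering the subsets of $\{1,\dots,n\}$ by cardinality, this says the images $\Phi(m_S)$ are obtained from the basis $\{e_S\}$ of $M$ by a unitriangular, hence invertible, matrix, so they are linearly independent in $M$; consequently the standard monomials are linearly independent in $Cl(V,q)$. Combined with the spanning statement they form a basis, and in particular $\operatorname{rank} Cl(V,q) = 2^n$.

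The step requiring the most care is the auxiliary representation over a ring in which $2$ need not be invertible: one cannot take $d_v$ to be contraction against a symmetric bilinear form, since then $\rho(v)^2$ would depend only on the associated bilinear form and fail to recover $q$ in characteristic $2$. The asymmetric, basis-dependent choice of $b$ is exactly what makes $\rho(v)^2 = q(v)$ hold identically; the rest is routine exterior-algebra bookkeeping, together with the standard but necessary fact that the antiderivations $d_v$ exist and are unique (equivalently, one simply writes out their matrices on the $e_S$).
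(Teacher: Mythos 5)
Your proof is correct. Note that the paper itself gives no argument for this theorem --- it simply cites Theorem IV.1.5.1 of Knus --- and what you have written is essentially the standard proof found in such references: spanning from the relations $v_iv_j+v_jv_i=\langle v_i,v_j\rangle$ and $v_i^2=q(v_i)$, and linear independence via the representation $\rho(v)=\varepsilon(v)+d_v$ of $Cl(V,q)$ on a free module of rank $2^n$ (the exterior algebra), built from a non-symmetric bilinear form $b$ with $b(v,v)=q(v)$. Your emphasis on choosing $b$ asymmetric rather than contracting against the symmetric form $\langle\,,\,\rangle$ is exactly the point that makes the argument work over an arbitrary commutative ring, where $2$ need not be invertible, and the unitriangularity of $m_S\mapsto\Phi(m_S)$ with respect to cardinality cleanly yields independence; the remaining checks (existence and uniqueness of the antiderivations $d_v$, $d_v^2=0$, the anticommutation identity, and $b(v,v)=q(v)$ via the coordinate expansion of $q$) are routine as you indicate.
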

For a simple proof, see Theorem IV. 1.5.1, ~\cite{K}.
\end{itemize}

The above theorem implies in particular that the map $i : V \rightarrow Cl(V,q)$ is injective. Since $i(v)^2 =q(v)$ for $v \in V$, it is clear that $i(V) \subseteq Cl$ is an embedding. We will refer to this as the Clifford embedding of $(V,q)$.

\vskip 2mm

\vskip 3mm
\subsection{Structure of Clifford algebras}

To analyze general embeddings, we need one more result about Clifford algebras, and this will be stated in Theorem ~\ref{injectivity}. The theorem says that a graded homomorphism from $Cl$ is injective whenever its restriction on $R$ is injective. 
The reader may assume that and jump ahead to the next section.

\vskip 1mm
For any $Z_2$-graded-algebra $A = A_0+ A_1$, the elements in $h(A) = A_0 \cup A_1$ will be called the \emph{homogeneous} elements of $A$. If $a \in h(a)$, we write $\partial(A) = i$ if $a \in A_i$, ($i =0,1$).

\noindent The graded tensor product of two algebras $A \hat{\otimes} B$ is defined as :
\[  (a\hat{\otimes} b)(a'\hat{\otimes} b')= (-1)^{\partial(b) \partial(a')}aa'\hat{\otimes} bb' \]
for all homogeneuos elements $a,a' \in A$ and $b, b' \in B$.
\vskip 2mm
One can use the universal property of Clifford Algebras to compute the Clifford algebra of an orthogonal sum of quadratic spaces, (see ~\cite{K},  Theorem IV.1.3.1.).

\begin{thm}\label{thm0}
The map $f:  (V_1,q_1) \perp (V_2, q_2)  \rightarrow Cl(V_1,q_1) \hat{\otimes}  Cl(V_2,q_2)$ defined by $f(x_1 +x_2) = x_1\hat{\otimes}1 + 1 \hat{\otimes}x_2$ induces an isomorphism \[Cl(V_1\perp V_2 ,q_1 \perp q_2)  \cong Cl(V_1,q_1) \hat{\otimes}  Cl(V_2,q_2).\]
\end{thm}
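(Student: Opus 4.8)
The plan is to construct the isomorphism and its inverse directly from the universal property, with no choice of basis. First I would check that the linear map $f$ is admissible for the universal property of $Cl(V_1\perp V_2, q_1\perp q_2)$, i.e.\ that $f(x_1+x_2)^2 = (q_1\perp q_2)(x_1+x_2)$ holds in $Cl(V_1,q_1)\hat{\otimes} Cl(V_2,q_2)$. Expanding $(x_1\hat{\otimes}1 + 1\hat{\otimes}x_2)^2$ with the graded multiplication rule, the two ``diagonal'' terms give $q_1(x_1)(1\hat{\otimes}1)$ and $q_2(x_2)(1\hat{\otimes}1)$, while the two ``cross'' terms are $x_1\hat{\otimes}x_2$ and $(-1)^{\partial(x_2)\partial(x_1)}\,x_1\hat{\otimes}x_2 = -\,x_1\hat{\otimes}x_2$, which cancel. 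Hence the universal property produces an algebra homomorphism $\bar f : Cl(V_1\perp V_2,q_1\perp q_2) \to Cl(V_1,q_1)\hat{\otimes} Cl(V_2,q_2)$ extending $f$.

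For the inverse I would start from the inclusions $V_i \hookrightarrow V_1\perp V_2 \hookrightarrow Cl(V_1\perp V_2,q_1\perp q_2)$; since squaring a vector of $V_i$ still computes $q_i$, the universal property of $Cl(V_i,q_i)$ yields graded algebra homomorphisms $\bar g_i : Cl(V_i,q_i) \to Cl(V_1\perp V_2,q_1\perp q_2)$. I would then define $g : Cl(V_1,q_1)\hat{\otimes} Cl(V_2,q_2) \to Cl(V_1\perp V_2,q_1\perp q_2)$ on homogeneous tensors by $g(a\hat{\otimes}b) = \bar g_1(a)\,\bar g_2(b)$ and extend bilinearly. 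The step I expect to be the main obstacle is showing that $g$ is multiplicative: one must verify $\bar g_2(b)\,\bar g_1(a') = (-1)^{\partial(a')\partial(b)}\,\bar g_1(a')\,\bar g_2(b)$ for homogeneous $a', b$, and then compare with the defining sign $(-1)^{\partial(b)\partial(a')}$ of the graded tensor product. This anticommutation reduces, by linearity and induction on degree, to the single identity $x_1x_2 + x_2x_1 = \langle x_1,x_2\rangle = 0$ for $x_1\in V_1$, $x_2\in V_2$, which is exactly where orthogonality of the sum is used (together with the relation $xy+yx=\langle x,y\rangle$ valid in any Clifford algebra).

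Finally I would check that $g$ and $\bar f$ are mutually inverse, and for this it is enough to test the two composites on algebra generators: $\bar f\circ g$ fixes $x_1\hat{\otimes}1$ and $1\hat{\otimes}x_2$, which generate the target algebra, while $g\circ\bar f$ fixes $x_1$ and $x_2$, which generate $Cl(V_1\perp V_2,q_1\perp q_2)$. Since both composites are algebra homomorphisms fixing a generating set, each is the identity, so $\bar f$ is the asserted isomorphism.

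As a sanity check (and an alternative to the last paragraph), one could instead note that $\bar f$ is surjective, since its image contains $x_1\hat{\otimes}1$ and $1\hat{\otimes}x_2$ and hence all products $x_1\hat{\otimes}x_2$; by Poincar\'e--Birkhoff--Witt both sides are free $R$-modules of rank $2^{\operatorname{rank}(V_1)+\operatorname{rank}(V_2)} = 2^{\operatorname{rank}(V_1)}\cdot 2^{\operatorname{rank}(V_2)}$, and a surjection between free modules of equal finite rank is an isomorphism. I prefer the explicit inverse above, as it keeps the argument basis-free.
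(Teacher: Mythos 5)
Your proposal is correct, and it is in fact more than the paper provides: the paper does not prove Theorem \ref{thm0} at all, but simply cites Knus (Theorem IV.1.3.1 of \cite{K}). Your argument is the standard universal-property proof and it is complete: the sign computation showing the cross terms $x_1\hat{\otimes}x_2$ and $(-1)^{\partial(x_2)\partial(x_1)}x_1\hat{\otimes}x_2$ cancel is right, and you correctly isolate the genuine crux of the inverse direction, namely that multiplicativity of $g$ reduces (by induction on products of vectors, which generate) to the anticommutation $x_1x_2+x_2x_1=\langle x_1,x_2\rangle=0$ for $x_1\in V_1$, $x_2\in V_2$ --- the one place orthogonality enters. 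Checking the two composites on the algebra generators $x_1\hat{\otimes}1$, $1\hat{\otimes}x_2$ and $x_1+x_2$ respectively then finishes the proof, exactly as you say. Your alternative ending (surjectivity plus a rank count via Poincar\'e--Birkhoff--Witt) is also valid in this paper's setting, where all modules are free of finite rank, though your preferred basis-free route via an explicit inverse is the better fit for the paper's stated aim of avoiding bases and is the more general argument.
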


The hyperbolic module $H(V) = V \oplus V^*$ is equipped with a quadratic form $q(x,f) =f(x)$. This quadratic space will be referred to as the hyperbolic space. We will return to them in the final section of the paper.

\begin{thm}\label{thm00}
Let $(V,q)$ be a non-singular quadratic space where $V$ is a free module with rank $n$. Then $(V,q) \perp (V, - q)  \cong H(V)$ and 
\[Cl(V,q) \hat{\otimes}  Cl(V,-q) \cong M_{2^n}(R)\]
\end{thm}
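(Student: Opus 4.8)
We need to show two things: (1) the quadratic space isomorphism $(V,q) \perp (V,-q) \cong H(V)$, and (2) the algebra isomorphism $Cl(V,q) \hat{\otimes} Cl(V,-q) \cong M_{2^n}(R)$.

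For (1): I'd define an explicit isometry. On $V \perp V$, a vector is a pair $(x,y)$ with form $q(x) - q(y)$. The classic trick is the map $(x,y) \mapsto (x+y,\ \phi(x-y))$ or similar, where we use the non-singularity to identify $V$ with $V^*$. Concretely, let $b: V \to V^*$ be the isomorphism $b(v) = \langle v, -\rangle$, which exists since $(V,q)$ is non-singular. Then send $(x,y) \in V\perp V$ to $\bigl(\tfrac{1}{?}(\dots),\ b(\dots)\bigr) \in V \oplus V^*$; I'd need to check the form matches $f(x) = q(x,f)$. The cleanest version: the map $H(V) \to (V,q)\perp(V,-q)$ doesn't quite work integrally without a $1/2$, so instead I'd work directly and verify $q(x) - q(y)$ equals the hyperbolic value of the image. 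This step should be routine linear algebra once the map is written down; the only subtlety is avoiding division by $2$, which non-singularity of $q$ (as opposed to just the bilinear form) is designed to handle.

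For (2): combine Theorem \ref{thm0} with part (1): $Cl(V,q)\hat\otimes Cl(V,-q) \cong Cl\bigl((V,q)\perp(V,-q)\bigr) \cong Cl(H(V))$. So it suffices to prove $Cl(H(V)) \cong M_{2^n}(R)$. For this I'd exhibit an explicit action of $H(V)$ on the exterior algebra $\Lambda V$, which is free of rank $2^n$. Given $(x,f) \in V \oplus V^*$, define the operator $T_{(x,f)}$ on $\Lambda V$ as $\lambda \mapsto x \wedge \lambda + \iota_f(\lambda)$ (exterior multiplication plus interior contraction by $f$). One checks $T_{(x,f)}^2 = f(x)\cdot \mathrm{id} = q(x,f)\cdot \mathrm{id}$, so by the universal property this induces an algebra map $Cl(H(V)) \to \mathrm{End}_R(\Lambda V) = M_{2^n}(R)$. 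Then I'd show this map is an isomorphism: surjectivity (hence bijectivity, by comparing ranks $2^n \cdot 2^n$ on each side) follows either by a direct computation showing the image contains enough elementary matrices, or — more in the spirit of this paper — by invoking Theorem \ref{injectivity}: the map is graded and injective on $R$ (it sends $1$ to $\mathrm{id}$), hence injective, and then a rank count over a field plus a localization/Nakayama argument, or just the fact that an injective $R$-linear map between free modules of the same finite rank whose cokernel is ... — actually the clean finish is: the map is injective by Theorem \ref{injectivity}, both sides are free of rank $2^{2n}$, and one checks the induced map is surjective modulo every maximal ideal (a standard fact over a field), so it is surjective by Nakayama, hence an isomorphism.

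**Main obstacle.** The genuinely delicate point is part (2)'s surjectivity/bijectivity over an arbitrary commutative ring: over a field the representation $Cl(H(V)) \to \mathrm{End}(\Lambda V)$ is classically known to be an isomorphism, but transferring this to $R$ requires care. I expect the smoothest route is to prove injectivity via Theorem \ref{injectivity} (it is the stated purpose of that theorem), and then deduce bijectivity by a rank argument combined with reduction mod maximal ideals — so the "hard part" is really just assembling these pieces correctly rather than any single hard computation. A secondary minor annoyance is keeping the quadratic-space isomorphism in part (1) integral (no division by $2$), which is exactly why the hypothesis is \emph{non-singular} rather than merely non-degenerate.
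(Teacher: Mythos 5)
The paper itself disposes of both claims by citation: the isometry $(V,q)\perp(V,-q)\cong H(V)$ is quoted from ~\cite{B1}, Ch.~5, Lemma 2.2, the isomorphism $Cl(H(V))\cong M_{2^n}(R)$ from ~\cite{HOM}, Theorem 7.1.10, and the two are combined, exactly as you propose, via Theorem~\ref{thm0}. So your architecture matches; the difference is that you attempt to prove the cited facts. Your exterior-algebra representation $T_{(x,f)}=x\wedge(-)+\iota_f$ is the standard construction and does give a graded algebra map $Cl(H(V))\to \mathrm{End}_R(\Lambda V)$, and the reduction-mod-maximal-ideals plus Nakayama argument for surjectivity is sound. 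But the step ``injective by Theorem~\ref{injectivity}'' is a genuine logical problem: in this paper Theorem~\ref{injectivity} is deduced from Theorem~\ref{thm000}, whose proof (Case 1) invokes Theorem~\ref{thm00} --- the very statement you are proving --- so your write-up is circular within the paper's development. Fortunately the circular step is unnecessary: once surjectivity is established, you have a surjective $R$-linear map between free modules of the same finite rank $2^{2n}$, which is automatically bijective; alternatively, over each residue field injectivity is free because $Cl$ of a nonsingular space of even rank is central simple there.

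Second, part (1) is never actually carried out. You correctly observe that the naive $(x,y)\mapsto(x+y,\,x-y)$-type isometry needs $\tfrac12$, but you stop at ``routine linear algebra once the map is written down,'' without producing a map that works integrally. The cited source argues structurally: the diagonal is a totally isotropic direct summand of rank $n$ in the non-singular space $(V,q)\perp(V,-q)$, and such a summand admits a hyperbolic complement, forcing the whole space to be $H(V)$ by rank count. If you prefer an explicit map, choose a bilinear form $\beta$ with $\beta(v,v)=q(v)$ and $\beta(v,w)+\beta(w,v)=\langle v,w\rangle$, and send $(x,y)\mapsto\bigl(x-y,\;z\mapsto \beta(z,x)+\beta(y,z)\bigr)$; this preserves the quadratic forms, and its bijectivity reduces, after an elementary column operation, to the invertibility of $v\mapsto\langle v,\cdot\rangle$, i.e.\ to non-singularity. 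Without one of these arguments (or the citation the paper uses), part (1) remains a gap in your proposal.
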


\begin{proof}
For a proof that $ (V,q) \perp (V, - q)$ and $H(V)$ are isomorphic as quadratic spaces, see  ~\cite{B1},  Ch. 5, Lemma 2.2.

It is also known that $Cl(H(V)) \cong M_{2^n}(R)$ (Theorem 7.1.10, ~\cite{HOM}). The result follows using Theorem ~\ref{thm0}.
\end{proof}

An ideal $J \subseteq Cl$ is said to be graded if it is a direct sum of the intersections $J_i = J\cap Cl_i$. 
A homomorphism between two $Z_2$- graded algebras is said to be graded if it preserves the grading. 
The kernel of a graded homomorphism is a graded ideal.

\begin{thm}\label{thm000}
Let $(V,q)$ be a non-degenerate quadratic space. Let $J$ be a graded ideal of $Cl(V,q)$ such that 
$J \cap R =\{0\}$. Then $J =\{0\}$.
\end{thm}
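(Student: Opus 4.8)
The plan is to reduce to the case where $(V,q)$ is non-singular and then turn $J$ into an honest two-sided ideal of a matrix algebra, where all ideals are known, by means of Theorem~\ref{thm00}.

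\emph{Reduction to the non-singular case.} Let $S$ be the set of non-zero-divisors of $R$ and $K=S^{-1}R$ its total ring of fractions, so that $R\hookrightarrow K$. Since $(V,q)$ is non-degenerate, $\det(\langle v_i,v_j\rangle)\in S$ becomes a unit in $K$, hence $(V_K,q_K):=(V\otimes_R K,\,q\otimes K)$ is non-singular of rank $n$ over $K$, and $Cl(V_K,q_K)=Cl(V,q)\otimes_R K$. As $K$ is $R$-flat, $J_K:=J\otimes_R K$ is again a graded ideal, and writing $Cl(V,q)=R\cdot 1\oplus W$ (with $W$ the span of the non-identity Poincar\'e-Birkhoff-Witt basis monomials) one gets $J_K\cap(K\cdot 1)=(J\cap R)\otimes_R K=0$. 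Finally $Cl(V,q)$ is $R$-free, hence $S$-torsion-free, so $J$ injects into $J_K$; thus it suffices to prove $J_K=0$, i.e. we may assume that $(V,q)$ is non-singular.

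\emph{The matrix argument.} Put $B=Cl(V,-q)$. Because $J=J_0\oplus J_1$ is graded, every element of $J\otimes_R B$ is a finite sum of simple tensors $j\otimes b$ with $j,b$ homogeneous, and the multiplication rule of the graded tensor product then shows at once that $\bar J:=J\,\hat{\otimes}\,B$ is a two-sided ideal of $Cl(V,q)\,\hat{\otimes}\,B$. (This is exactly where gradedness is indispensable: the non-graded ideal generated by $1+v$ in $\mathbb{R}[v]/(v^2-1)$ meets $R$ trivially yet is nonzero, so it cannot be tensored up in this way.) By Theorem~\ref{thm00} we have $Cl(V,q)\,\hat{\otimes}\,B\cong M_{2^n}(R)$, so $\bar J$ corresponds to a two-sided ideal of $M_{2^n}(R)$, which must be of the form $M_{2^n}(I)$ for a unique ideal $I\trianglelefteq R$; intersecting with the scalar matrices identifies $I$ with $\bar J\cap(R\cdot 1)$. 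But $\bar J\cap(R\cdot 1)\cong J\cap R=0$, using $Cl(V,q)=R\cdot 1\oplus W$ and $B=R\cdot 1\oplus W'$ together with the fact that tensor products respect these direct sums. Hence $I=0$, so $\bar J=0$, i.e. $J\otimes_R Cl(V,-q)=0$; since $Cl(V,-q)$ is $R$-free of rank $2^n\ge 1$ it is faithfully flat, and therefore $J=0$.

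\emph{Main obstacle.} Once the strategy is in place the computations are routine bookkeeping with the PBW decomposition; the two points that need care are (i) the reduction to the non-singular case, which is forced on us because Theorem~\ref{thm00} is available only there, and (ii) locating precisely where the gradedness hypothesis is used — namely in the assertion that $\bar J$ is a two-sided ideal of the graded tensor product, an assertion that genuinely fails for non-graded ideals meeting $R$ trivially.
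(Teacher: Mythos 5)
Your proof is correct and follows essentially the same route as the paper: reduce to the non-singular case by inverting non-zero-divisors (the paper localizes at $R[d^{-1}]$ rather than the total quotient ring), then use Theorem~\ref{thm00} to transport $J$ into $M_{2^n}(R)$, where every two-sided ideal is of the form $M_{2^n}(I)$, and conclude $I=0$ from $J\cap R=\{0\}$. The only cosmetic difference is that you work directly with the graded ideal $J\,\hat{\otimes}\,Cl(V,-q)$ and finish with faithful flatness, whereas the paper phrases the same step as the kernel of the induced graded map $M_{2^n}(R)\rightarrow (Cl(V,q)/J)\,\hat{\otimes}\,Cl(V,-q)$.
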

\begin{proof}

\textbf{Case 1 : $(V,q)$ is non-singular.}
\vskip 1mm
Consider the graded homomorphism $\phi : Cl(V,q) \rightarrow Cl(V,q)/J$. By Theorem ~\ref{thm00}, the map $\phi$ extends to a graded homomorphism 
$\phi' : M_{2^n}(R) \rightarrow Cl(V,q)/J \hskip 1mm   \hat{\otimes} \hskip 1mm Cl(V,-q) $.  
Now every ideal of $M_{2^n}(R)$ is of the form $M_{2^n}(I)$ for some ideal $I$ in $R$. Since $J \cap R =\{0\}$, this is possible only if $I=\{0\}$.
\vskip 1mm

\textbf{Case 2 : $(V,q)$ is non-degenerate.}
\vskip 1mm
When $(V,q)$ is non-degenerate, $d = det \bigl(\langle v_i,v_j\rangle \bigr)$ is a non-zero divisor in $R$ (see Section 1.2). 
Then $V \otimes R[d^{-1}]$ is a non-singular quadratic space and so $J \otimes R[d^{-1}]$ is the zero ideal by Case 1. 
Therefore $J = \{0\}$.
\end{proof}

\begin{thm}\label{injectivity}
Let $(V,q)$ be a non-degenerate quadratic space. 
Let $\phi : Cl(V,q) \rightarrow A$ be a graded (algebra) homomorphism such that $ker(\phi) \cap R =\{0\}$. 
Then $\phi$ is injective.
\end{thm}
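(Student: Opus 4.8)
The plan is to deduce Theorem~\ref{injectivity} almost immediately from Theorem~\ref{thm000}. The kernel of a graded algebra homomorphism is a graded ideal, so $J := \ker(\phi)$ is a graded ideal of $Cl(V,q)$. By hypothesis $J \cap R = \{0\}$, and $(V,q)$ is non-degenerate, so Theorem~\ref{thm000} applies verbatim and gives $J = \{0\}$, i.e.\ $\phi$ is injective. So the proof is essentially one line once the earlier structure theorem is in hand.

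The only point that needs a word of justification is the claim that $\ker(\phi)$ is genuinely a \emph{graded} ideal, which is where the hypothesis that $\phi$ is a graded homomorphism gets used. Concretely: if $a \in \ker(\phi)$, write $a = a_0 + a_1$ with $a_i \in Cl_i$; then $0 = \phi(a) = \phi(a_0) + \phi(a_1)$ with $\phi(a_i) \in A_i$, and since $A = A_0 \oplus A_1$ is a direct sum, each $\phi(a_i) = 0$, so $a_i \in \ker(\phi)$. Hence $\ker(\phi) = (\ker(\phi)\cap Cl_0) \oplus (\ker(\phi)\cap Cl_1)$ is graded. That it is a two-sided ideal is automatic since $\phi$ is an algebra homomorphism. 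This is the remark already made in the excerpt just before the statement of Theorem~\ref{thm000}, so I would simply cite it.

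I do not anticipate any real obstacle here: all the work has been front-loaded into Theorem~\ref{thm000} (and behind it, Theorems~\ref{thm00} and~\ref{thm0} together with the fact that the ideals of a matrix ring $M_{2^n}(R)$ are exactly the $M_{2^n}(I)$, plus the localization trick to pass from the non-singular to the non-degenerate case). If anything, the thing to be careful about is making sure Theorem~\ref{thm000} is stated for exactly the generality needed — non-degenerate, not merely non-singular — which it is. So the proof I would write is: ``Since $\phi$ is a graded homomorphism, $\ker(\phi)$ is a graded ideal of $Cl(V,q)$, and by assumption $\ker(\phi) \cap R = \{0\}$. As $(V,q)$ is non-degenerate, Theorem~\ref{thm000} forces $\ker(\phi) = \{0\}$, so $\phi$ is injective.''
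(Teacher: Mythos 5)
Your proposal is correct and is exactly the paper's own proof: the kernel of the graded homomorphism is a graded ideal meeting $R$ trivially, so Theorem~\ref{thm000} gives $\ker(\phi)=\{0\}$. The extra remark spelling out why the kernel is graded is a fine (and accurate) elaboration of the fact the paper states just before Theorem~\ref{thm000}.
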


\begin{proof}
The kernel of $\phi$ is a graded ideal in $Cl(V,q)$. The result follows immediately from Theorem ~\ref{thm000}. 
\end{proof}

Note that Theorem ~\ref{injectivity} allows the possibility of $d = det \bigl(\langle v_i,v_j\rangle \bigr)$ being a zero divisor in $A$, even though $d$ is a non-zero divisor in $R$.

\vskip 3mm

\section{{Basic properties of Embeddings}}
\vskip 5mm
 \subsection{Connecting two different embeddings :}

 Let $Cl$ denote the Clifford algebra of $(V,q)$. To avoid any confusion, we will denote the copy of $V$ in its Clifford algebra by $V_{Cl}$.  

\vskip 1mm
Let $ (V, q) \subseteq A$ be an embedding with $q(v) = v\overline{v} = \overline{v}v$, for some isometry $v \rightarrow \overline{v}$, for $v \in V$. 
\vskip 1mm

Let $\phi : V_{Cl} \rightarrow M_2(A)$ defined by
$ v_{cl} \rightarrow  \bigl(\begin{smallmatrix}
0                  & v \\
\overline{v}          & 0 \end{smallmatrix}\bigr).$
As $\phi^2(v_{cl}) =  q(v)$ for all $v \in V$, the map $\phi$ extends to an $R$-algebra homomorphism 
$\phi : Cl \rightarrow M_2(A)$.  This is in fact a graded homomorphism, where the even and odd elements of $M_2(A)$ correspond to matrices of the form
$(\begin{smallmatrix}
*    &  0  \\
0       &*  \end{smallmatrix})$ and $(\begin{smallmatrix}
 0   &  *  \\
*        & 0 \end{smallmatrix})$. 

Let $ker(\phi)$ denote the kernel of $\phi$.  Since $\phi$ restricts to an injective map on $V_{Cl}$, we have  $ ker(\phi) \cap R = \{0\}$. Therefore it follows from Theorem ~\ref{injectivity} that $\phi$ is injective. 
\vskip 2mm

From here on, we will identify $Cl$ with its image in $M_2(A)$.

\begin{thm}\label{thm1}
Let $(V,q)$ be a quadratic space embedded in an algebra $A$. Let $v, w \in V$. Then $vwv\in V$ and $\overline{vwv} = \overline{v}\cdot \overline{w}\cdot \overline{v}.$
\end{thm}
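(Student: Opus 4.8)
The plan is to work inside $M_2(A)$, where we have identified $Cl = Cl(V,q)$ with its image under the graded homomorphism $\phi$. Under this identification, a vector $v \in V_{Cl}$ is the matrix $\left(\begin{smallmatrix} 0 & v \\ \overline{v} & 0 \end{smallmatrix}\right)$. First I would compute the product $vwv$ for $v, w \in V_{Cl}$ directly as a product of three such off-diagonal matrices. The off-diagonal times off-diagonal is diagonal, and multiplying once more by an off-diagonal matrix returns an off-diagonal matrix; explicitly, $\left(\begin{smallmatrix} 0 & v \\ \overline{v} & 0 \end{smallmatrix}\right)\left(\begin{smallmatrix} 0 & w \\ \overline{w} & 0 \end{smallmatrix}\right)\left(\begin{smallmatrix} 0 & v \\ \overline{v} & 0 \end{smallmatrix}\right) = \left(\begin{smallmatrix} 0 & vwv \\ \overline{v}\,\overline{w}\,\overline{v} & 0 \end{smallmatrix}\right)$, where the top-right entry $vwv$ and bottom-left entry $\overline{v}\,\overline{w}\,\overline{v}$ are computed in $A$.

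Next I would observe that the element $vwv$ lies in $Cl$ (as it is a product of elements of $V_{Cl} \subseteq Cl$), and moreover it is a homogeneous element of odd degree, since $V_{Cl} \subseteq Cl_1$ and $Cl_1 Cl_1 Cl_1 \subseteq Cl_1$. The key classical fact I would invoke is that for a non-degenerate (or at least non-singular; one reduces to it as in Theorem~\ref{thm000}) quadratic space, reflection-type identities hold in the Clifford algebra: for $v, w \in V_{Cl}$ one has $vw + wv = \langle v, w\rangle \in R$, and consequently $vwv = \langle v,w\rangle v - q(w) v \cdot \text{(correction)}$ — more precisely $vwv = \langle v,w\rangle\, v - q(v)\, w$, which manifestly lies in $V_{Cl}$. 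Thus $vwv \in V_{Cl}$ inside $Cl$. Transporting this back through the embedding $Cl \hookrightarrow M_2(A)$ and reading off the matrix entries: since $vwv \in V_{Cl}$ it equals $\left(\begin{smallmatrix} 0 & u \\ \overline{u} & 0 \end{smallmatrix}\right)$ for the element $u \in V \subseteq A$ that $vwv$ names, so comparing with the computation above forces the top-right entry ($vwv$ computed in $A$) to lie in $V$, and the bottom-left entry to be precisely $\overline{u} = \overline{vwv}$; hence $\overline{vwv} = \overline{v}\,\overline{w}\,\overline{v}$.

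The main obstacle, and the step requiring the most care, is the bookkeeping between the three a priori different meanings of "$vwv$": the product of three matrices in $M_2(A)$, the product of three elements of $V \subseteq A$, and the product of three elements of $V_{Cl} \subseteq Cl$. The whole argument hinges on the injectivity of $\phi$ (Theorem~\ref{injectivity}) to know that an identity holding in $Cl$ can be read off faithfully from matrix entries, and on the fact that $\phi$ restricted to $V_{Cl}$ is exactly $v \mapsto \left(\begin{smallmatrix} 0 & v \\ \overline{v} & 0 \end{smallmatrix}\right)$, so that membership of the matrix product in $\phi(V_{Cl})$ is equivalent to its top-right entry lying in $V \subseteq A$ with bottom-left entry its conjugate. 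Once this identification is set up cleanly, the identity $vwv = \langle v,w \rangle v - q(v) w$ in $Cl$ does all the remaining work, and applying it again (or applying the isometry $\alpha: v \mapsto \overline{v}$, which preserves $\langle\,,\,\rangle$ and $q$) gives the conjugated identity $\overline{v}\,\overline{w}\,\overline{v} = \langle v,w\rangle\,\overline{v} - q(v)\,\overline{w} = \overline{vwv}$.
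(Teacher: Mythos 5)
Your key matrix computation is wrong, and it fails at exactly the point where the paper has to be careful. With $\phi(v_{cl}) = \bigl(\begin{smallmatrix} 0 & v \\ \overline{v} & 0\end{smallmatrix}\bigr)$ and $\phi(w_{cl}) = \bigl(\begin{smallmatrix} 0 & w \\ \overline{w} & 0\end{smallmatrix}\bigr)$, the triple product is $\bigl(\begin{smallmatrix} 0 & v\overline{w}v \\ \overline{v}\,w\,\overline{v} & 0\end{smallmatrix}\bigr)$, not $\bigl(\begin{smallmatrix} 0 & vwv \\ \overline{v}\,\overline{w}\,\overline{v} & 0\end{smallmatrix}\bigr)$: the middle factor enters each corner through the opposite entry. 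So what your argument actually proves is the twisted statement $v\overline{w}v \in V$ and $\overline{v\overline{w}v} = \overline{v}\,w\,\overline{v}$. Renaming $\overline{w}$ as $w$ (legitimate, since $\alpha$ is bijective) does give $vwv \in V$, but the conjugation formula then reads $\overline{vwv} = \overline{v}\,\alpha^{-1}(w)\,\overline{v}$, and at this stage of the paper you may not assume $\alpha^{2} = \mathrm{id}$ (that is proved only later, under the extra hypotheses $1_A \in V$ and $\overline{1_A} = 1_A$). Passing from $\overline{v}\,\alpha^{-1}(w)\,\overline{v}$ to $\overline{v}\,\overline{w}\,\overline{v}$ needs a genuine further argument, e.g.\ using the identity $x\overline{y}x = \langle x,y\rangle x - q(x)y$ valid in $A$ for all $x,y \in V$, together with $\langle v, \alpha^{-1}(w)\rangle = \langle \overline{v}, w\rangle$, which uses that $\alpha$ preserves the bilinear form.

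Your closing patch does not supply this missing step: the identity $vwv = \langle v,w\rangle v - q(v)w$ holds for the Clifford product, where $v^{2} = q(v)$, but not for the product in $A$, where instead $v\overline{v} = q(v)$ (the correct identity in $A$ is $v\overline{w}v = \langle v,w\rangle v - q(v)w$). ``Applying the isometry $\alpha$'' to obtain $\overline{v}\,\overline{w}\,\overline{v} = \langle v,w\rangle\,\overline{v} - q(v)\,\overline{w}$ treats $\alpha$ as multiplicative on such products, which is precisely the assertion being proved, and that displayed formula is false in a general embedding (e.g.\ for Suslin matrices, where $\overline{v} \neq v$). The paper sidesteps all of this by substituting into the second slot not $\phi(w_{cl})$ but the off-diagonal matrix $\bigl(\begin{smallmatrix} 0 & \overline{w} \\ w & 0\end{smallmatrix}\bigr)$, so that the triple product comes out directly as $\bigl(\begin{smallmatrix} 0 & vwv \\ \overline{v}\,\overline{w}\,\overline{v} & 0\end{smallmatrix}\bigr)$; either adopt that substitution (and justify that the matrix you feed in lies in the image of $V_{Cl}$, which again touches the $\alpha^{-1}$ versus $\alpha$ issue) or keep your substitution and add the isometry argument sketched above.
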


\begin{proof}
We will first prove the theorem for the Clifford embedding. Let $z_1, z_2 \in V_{Cl}$. Then $$\langle z_1, z_2\rangle := (z_1 + z_2)^2 - z_1^2 - z_2^2 = z_1z_2 + z_2z_1$$ is an element in $R$. 
 Multiplying by $z_1$ we get $$z_1\langle z_1,z_2 \rangle= z_1^2z_2 + z_1z_2z_1.$$
 
 \vskip2mm
 
Since $z_1^2 = q(z_1)$, it follows that $z_1z_2z_1 \in V_{Cl}$. For any embedding $V \subseteq A$, there is a map $\phi : Cl \rightarrow M_2(A)$ given by $v_{cl} \rightarrow  \bigl( \begin{smallmatrix}
0          & v\\
\overline{v}     & 0 \end{smallmatrix}\bigr) $. 

\vskip 2mm

Take 
$z_1 \rightarrow  \bigl( \begin{smallmatrix}
0          & v\\
\overline{v}     & 0 \end{smallmatrix}\bigr) $ 
and $z_2 \rightarrow  \bigl(\begin{smallmatrix}
0          & \overline{w}\\
w     & 0 \end{smallmatrix}\bigr) $.
Then $z_1z_2z_1 
\rightarrow \bigl(\begin{smallmatrix}
0                             &  vwv \\
\overline{v}\overline{w} \overline{v} & 0 \end{smallmatrix}\bigr) .$
\end{proof}

Given an embedding, one can also treat $(V,q)$ as a non-associative algebra, with its multiplication given by $v \bullet w = vwv$ for $v,w \in V$. Under this multiplication, $(V,q)$ 
becomes a Quadratic Jordan algebra. By the above theorem, $\overline{v \bullet w} = \overline{v} \bullet \overline{w}$. Then the bijection $v \rightarrow \overline{v}$ in $(V,q)$ is not only an isometry, but 
also an algebra automorphism of $(V,q)$. In addition, if $1 \in V$, then $(1+v)w(1+v) \in V$, and so $vw +wv \in V$. A general theory of Quadratic Jordan algebras can be found in ~\cite{J, M}. 

\subsection{The Standard involution}

The map $ v_{Cl}\rightarrow - v_{Cl}$ can be viewed as an inclusion of $V_{Cl}$ in the opposite algebra of $Cl$. By the universal property of the Clifford algebra, 
this map extends to an involution $*$ of $Cl$. This is called the standard involution on $Cl$. In terms of the algebra $M_2(A)$, we have
$(\begin{smallmatrix}
0      &  v  \\
\overline{v}        &  0 \end{smallmatrix})^* =
(\begin{smallmatrix}
0      &  -v  \\
-\overline{v}       &  0 \end{smallmatrix})$.
We will see below that under certain circumstances, the standard involution on $Cl$ restricts to an involution of $A$.

\vskip 1mm

One might also wonder if the map $v \rightarrow -v$ or the isometry $\alpha(v) = \overline{v}$ (for $v\in V$) extends to an involution of $A$. 
We will keep returning to similar questions in the paper. For now it is not even clear what values, the order of $\alpha$ can take. 

In the Clifford embedding, the isometry of $V$ is simply the identity map. For Composition algebras and (as we'll see later) Suslin matrices, the isometry corresponding to the embedding has order $2$. This is not an accident and we will now show that this is true whenever $1_A\in V$ and $\overline{1_A} = 1_A$.

\vskip 1mm
Suppose  $1_A\in V$ and $ \overline{1_A} = 1_A$. Then we have the following nice implications :

\begin{itemize}
\item Let $v \in V$. Since $q(1+v) = (1+v)(\overline{1+v})$ is a scalar, so is $v+ \overline{v}$.

\item
We have \[\alpha^2(x)\alpha(x) = \alpha(x)x, \hskip 3mm \alpha^2(x+1)\alpha(x+1) = \alpha(x+1)(x+1).\] Using the fact that $\alpha$ is linear and 
cancelling terms, it follows that $\alpha^2(x) = x$ for all $x \in V$, i.e. $\alpha$ has order $2$.

\item  Now suppose $A \subseteq Cl$, i.e., $Cl$ contains all elements $ (\begin{smallmatrix}
a      &  0  \\
0        &  a \end{smallmatrix})$ with $a \in A$; Then we will show that the standard involution restricts to $A$.
\vskip 1mm
First notice that the even part of the Clifford algebra $Cl_0$ is closed under the standard involution, and its image in $M_2(A)$  consists of matrices of the form 
$(\begin{smallmatrix}
x      &  0  \\
0        &  y \end{smallmatrix})$. 
We will simply write $(x,y)$ instead of $(\begin{smallmatrix}
x      &  0  \\
0        &  y \end{smallmatrix})$.

\vskip 1mm
Let $(a,a)^* = (x,y)$. For $A$ to be closed under the involution, we need $x =y$. 

Let $e = (\begin{smallmatrix}
0      &  1  \\
1        &  0 \end{smallmatrix})$. We have $e^* = -e$ as $e \in V_{M_2(A)}$.
Since $e(a,a)e = (a,a)$, we have $e^*(a,a)^*e^* = (a,a)^*$.
Therefore $$(y,x) =e \cdot (a,a)^* \cdot e = (x,y)$$
and so the standard involution on $Cl$ restricts to $A$.

\end{itemize}

Conversely, given an involution of $A$, one might ask if it can be extended to the standard involution on $Cl$.
To explore this possibility, let us write $M =   \bigl(\begin{smallmatrix}
a          & b\\
c          & d \end{smallmatrix}\bigr) $ as a $2 \times 2$ matrix and 
analyze its conjugate in terms of its blocks. The table below illustrates, for a few examples, how the action of an involution $*$ on $A$ can be extended to the standard involution in $Cl$, which is seen as a sub-algebra of $M_2(A)$.

Note that in order
to show that an involution corresponds to the standard involution of the Clifford algebra, it is enough to check that its action on the elements of $V_{Cl}$ is multiplication by $-1$. In other words, if $ z = \bigl(\begin{smallmatrix}
0      &  v  \\
\overline{v}        &  0 \end{smallmatrix}\bigr)$, then we need $z^* = -z$. This is clearly the case for both the involutions defined in the table below.

\vskip 3mm

\begin{center}\label{table1}

\begin{tabular}{ |c|c|c|c| }
\hline
\multicolumn{4}{|c|}{Standard Involution on $M =   \bigl(\begin{smallmatrix}
a          & b\\
c          & d \end{smallmatrix}\bigr) $} \\
\hline
&&&\\
Form 1: & $v^* = u\cdot v$& $u^2 =1$ , $u \in R$ &  $M^* = \bigl(\begin{smallmatrix}
d^*          & -ub^*\\
-uc^*          & a^* \end{smallmatrix}\bigr) $\\
&&&\\
Form 2 : &$v^* = u\cdot \overline{v}$& $u^2 =1$ , $u \in R$ &  $ M^* =\bigl(\begin{smallmatrix}
a^*          & -uc^*\\
-ub^*          & d^* \end{smallmatrix}\bigr) $\\
 &&&\\
   \hline
\end{tabular}
\end{center}
\vskip 2mm 
Since the involution acts trivially on scalar matrices, notice that if $1_A \in V$, and $v^* = uv$ or $v^* = u \overline{v}$, then it follows that $u =1$.

\vskip 2mm


\section{{The Spin Representation: When $v^*= v$ for all $v \in V$}}

Motivated by the discussion in the previous section, we will now analyze embeddings $(V,q) \subseteq A$ with the following conditions:
\begin{enumerate}  
\item  $1_A \in V_A$ and $\overline{1_A} = 1_A $. 
\vskip 2mm
\item There is an involution $*$ of $A$ that restricts to the identity map on $V$, i.e, $v^*= v$ for all $v \in V$. 
 \end{enumerate}    
 
We will continue to identify $Cl$ as a sub-algebra of $M_2(A)$.  Let $M =  \bigl(\begin{smallmatrix}
a          & b\\
c          & d \end{smallmatrix}\bigr)\in M_2(A)$.
Then $M^* = 
  \bigl(\begin{smallmatrix}
d^*          & -b^*\\
-c^*          & a^* \end{smallmatrix}\bigr) $ gives us the standard involution on $Cl$. 
\vskip 1mm
In particular, $\big(\begin{smallmatrix}
g_1      &  0 \\
0        &  g_2 \end{smallmatrix}\big)^* = \big(\begin{smallmatrix}
g_2^*      &  0 \\
0        &  g_1^* \end{smallmatrix}\big).$ For convenience, we will sometimes write $(g_1,g_2)$ instead of $(\begin{smallmatrix}
g_1      &  0 \\
0        &  g_2 \end{smallmatrix}).$
     
\vskip 2mm

\subsection{Spin group}
The following groups are relevant to our discussion : 
    \[U^0(V) := \{x \in Cl_0 \, | \,  xx^* = 1\}\]
    \[ Spin(V):= \{x \in U^0(R) \,| \, xV_{Cl}x^{-1} = V\}.\] 
\vskip 1mm
Notice that the action of the Spin group on $V$ is an isometry of $V$.

\subsection{The Spin representation :}

We will define a group
$SG(A) \subset A$ and show that it is isomorphic to the Spin group, when $v^*= v$ for all $v \in V$. 
\vskip1mm
Let $(g_1, g_2) \in Spin (V)$. Since $(g_1, g_2)^* = (g_2^*, g_1^*)$, we have 
$g_2 = g_1^{*-1}.$
\vskip 1mm

Let $(g, g^{*{-1}}) \in Spin(V)$ and $v \in V$. By definition, 
 there exists an element $w \in V$ such that 
$(g, g^{*{-1}}) \big(\begin{smallmatrix}
0                  & v \\
\overline{v}           & 0 \end{smallmatrix}\big)
(g^{-1}, g^*) = 
 \big(\begin{smallmatrix}
0                  & w \\
\overline{w}          & 0 \end{smallmatrix}\big) $, i.e.,

\[  \begin{pmatrix}
0                  & gvg^* \\
g^{*{-1}}\overline{v}g^{-1}           & 0 \end{pmatrix}  = 
 \begin{pmatrix}
0                  & w \\
\overline{w}         & 0 \end{pmatrix} .\]
\vskip 3mm

\noindent {Let $g\bullet v = gvg^*$ for $g\in A$.}  \vskip 3mm

\noindent If $(g, g^{*{-1}}) \in Spin(V)$, then $g\bullet v \in V$.
 Let $A^{\times}$ denote the invertible elements of $A$. Consider the set \[G(A) = \{\ g\in A^{\times} \ | \text{ $g\bullet v \in V$ 
 $\forall$ $v \in V$}\}.\]
 
Since the action $\bullet$ is bijective, it is easy to see that $G(A)$ is a group and is closed under the involution $^*$.

\vskip 1mm
\noindent One has the homomorphism
$$\chi : Spin(V) \rightarrow G(A)$$ given by $(g,{g^*}^{-1}) \rightarrow g.$
\vskip 1mm

Next, we will use the quadratic form $q$ on $V$ to define a `norm' on $G(A)$. The Spin group will turn out to be isomorphic to the subgroup of $G(A)$ whose elements have unit norm.
\vskip 2mm
Let us begin with three simple lemmas which help us show that \[q(g\bullet v) = q(gg^*)q(v), \text{\hskip 3mm for $g \in G(A).$}\]

\vskip 2mm
\begin{lemma}\label{lemma1}
Let $v \in V$ such that \{$v$, $1$\} are linearly independent. Suppose there exists an element $v' \in V$ such that $v +v'$ and $vv'$ are scalars. Then $v' = \overline{v}$.
\end{lemma}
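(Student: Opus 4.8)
The plan is to exploit the defining embedding relations $q(v)=v\overline v=\overline v v$ together with the hypothesis that $v+v'$ and $vv'$ are scalars, and compare with the analogous facts for $\overline v$. First I would set $s = v+v'\in R$ and $p = vv'\in R$. Then $v' = s - v$, and substituting into $p = vv'$ gives $p = v(s-v) = sv - v^2$. Now $v^2$ need not be a scalar for a general embedding (only $v\overline v$ is), so instead I would play $p$ against the scalar $v'v$ as well: since $v+v'$ is a scalar, $v(v+v') = (v+v')v$, hence $vv' = v'v = p$, so $v'$ commutes with $v$. That plus $v' = s-v$ is automatic, so I need a second relation. The natural one comes from the isometry: apply the known fact (Theorem~\ref{thm1}, or rather the elementary observation that for any embedding $q(v+v') = (v+v')\overline{(v+v')}$) — but more directly, I would use that $v\overline v$ is a scalar and try to show $v\overline v = vv'$, i.e. $v(\overline v - v') = 0$, and then cancel $v$.

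Concretely, here is the cleaner route I would carry out. From $s=v+v'\in R$ we get, multiplying on the left by $v$ and on the right by $v$ respectively, $v^2 + vv' = sv$ and $v^2 + v'v = vs = sv$; subtracting shows $vv' = v'v$ (already noted). Next, since $s=v+v'$ is a scalar and the isometry $\alpha:x\mapsto\overline x$ fixes scalars up to... actually $\alpha$ fixes $R$ pointwise, so $\overline{v} + \overline{v'} = \overline{v+v'} = v+v' = s$. Therefore $\overline v - v = v' - \overline{v'}$. Also $q(v) = v\overline v$ and, because $v+v'\in R$ and hence $v' = s-v$, we can compute $v\overline{v}$ versus $vv'$: I want to show these agree. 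Using $p = vv' \in R$, we have $p = \overline p = \overline{vv'} = \overline{v'}\,\overline{v}$ (since $\alpha$ is an algebra homomorphism on products of the form... wait, $\alpha$ need not be multiplicative in general). This is the delicate point, so I would instead avoid multiplicativity of $\alpha$ and argue as follows: multiply the scalar $p = vv'$ on the left by $\overline v$: $\overline v p = \overline v v v' = q(v) v'$. On the other hand $\overline v p = p \overline v = v v' \overline v$. So $q(v) v' = v v' \overline v = v (v' \overline v)$. Now I also have $v + v' \in R$ so $v' \overline v = (s - v)\overline v = s\overline v - v\overline v = s\overline v - q(v)$, giving $q(v) v' = v(s\overline v - q(v)) = sv\overline v - q(v) v = s\,q(v) - q(v) v = q(v)(s - v) = q(v) v'$. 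That is a tautology — so this particular manipulation does not pin down $v'$, which tells me the real content must use linear independence of $\{v,1\}$ more forcefully.

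Here is the step I actually expect to close the argument, and the main obstacle. The key must be: from $v' = s - v$ and $\overline v = s - v''$ for the analogous data of $v$... no. Rather, the point of the lemma is uniqueness: $\overline v$ itself satisfies $v + \overline v \in R$ (shown in Section 4's bullet list under the hypothesis $1\in V$ — wait, that needs $1\in V$, which is \emph{not} assumed here). So I cannot assume $v + \overline v$ is scalar. Instead, the cleanest path: we are given $v + v'\in R$ and $vv'\in R$. Consider the element $v\overline v = q(v)\in R$ and also $\overline v v = q(v)$. I claim $v' - \overline v$ is killed by $v$ on both sides, and then that forces $v' = \overline v$ by a linear-independence / nondegeneracy argument. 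Specifically, $v(v' - \overline v) = vv' - q(v) \in R$, and $(v' - \overline v)v = v'v - q(v) = vv' - q(v)\in R$ as well (using $v'v = vv'$), so $v(v'-\overline v) = (v'-\overline v)v =: c \in R$. Now write $w = v' - \overline v$; then $vw = wv = c$ and also $w \in V$ (difference of elements of $V$, but wait $v'\in V$ and $\overline v\in V$ since $\alpha$ is an isometry of $V$, so yes $w\in V$). Then $\langle v, w\rangle = vw + wv = 2c$... hmm, and I'd want to show $w$ is a scalar multiple of... Actually since $vw = c\in R$, if $c\neq 0$ then $v$ is invertible with $v^{-1} = c^{-1}w$, forcing $w \in Rv$, say $w = \lambda v$, then $\lambda v^2 = c$; combined with $v^2 = sv - vv' = sv - p$ we get $\lambda(sv - p) = c$, so $\lambda s v = c + \lambda p \in R$, and since $\{v,1\}$ are independent, $\lambda s = 0$ and $c + \lambda p = 0$. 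If $c = 0$ then $vw = 0$ with $w\in V$; pairing and using nondegeneracy plus independence of $\{v,1\}$ should force $w=0$. The main obstacle is handling the invertible/zero-divisor dichotomy for $v$ cleanly over a general commutative ring — I expect to need to localize or to invoke non-degeneracy of $(V,q)$ (the determinant of the Gram matrix being a non-zero-divisor, as in Section 1.2) to rule out the pathological case and conclude $w = v' - \overline v = 0$.
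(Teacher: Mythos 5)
There is a genuine gap, and it is the very point you set aside midway. Lemma~\ref{lemma1} is stated under the standing hypotheses of Section 4, namely $1_A\in V$ and $\overline{1_A}=1_A$ (together with the involution fixing $V$); in particular, by the bullet list in Section 3.2, $v+\overline{v}\in R$ for \emph{every} $v\in V$. The paper's proof is exactly the two-line argument you almost wrote and then discarded: from $v+\overline{v}\in R$ and $v+v'\in R$ one gets $\overline{v}=v'+r$ with $r\in R$; then $q(v)=v\overline{v}=vv'+rv$, so $rv\in R$, and linear independence of $\{v,1\}$ forces $r=0$, i.e.\ $v'=\overline{v}$. Your assertion that ``$1\in V$ is not assumed here'' is mistaken, and without that assumption the lemma is simply false: in the Clifford embedding of the hyperbolic plane (where $\overline{v}=v$), take $v=e_1$ isotropic and $v'=-e_1$; then $v+v'=0\in R$, $vv'=-q(e_1)=0\in R$, $\{v,1\}$ is linearly independent by Poincar\'e--Birkhoff--Witt, yet $v'\neq\overline{v}$ when $2\neq 0$. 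So no argument that avoids $v+\overline{v}\in R$ can succeed.

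The fallback dichotomy you propose also does not close. Setting $w=v'-\overline{v}$ with $vw=wv=c\in R$: in the case $c\neq 0$ you claim $v$ is invertible with $v^{-1}=c^{-1}w$, but over a general commutative ring a nonzero scalar $c$ need not be a unit, so this step is unjustified; and in the case $c=0$, the implication ``$vw=wv=0$ with $w\in V$ and $(V,q)$ non-degenerate $\Rightarrow w=0$'' is false — the same example above gives $v=e_1$, $w=-2e_1\neq 0$, $vw=0$. The correct fix is not localization or non-degeneracy of the Gram matrix but simply invoking the Section-4 hypotheses: once $v+\overline{v}$ is known to be scalar, the uniqueness statement follows from $vv'\in R$ and the independence of $\{v,1\}$ as in the paper.
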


\begin{proof}
Since $v +\overline{v} \in R$, it follows that $\overline{v} = v' +r$ for some $r \in R$. Since $v\overline{v} = q(v)$, it follows that $v(v' + r) \in R$, implying $rv \in R$. Therefore $r=0$ and $v' = \overline{v}$.
\end{proof}

\vskip 3mm
\begin{lemma}\label{lemma2}
Suppose $v_1, v_2 \in V$ and $q(v_2) = v_2\overline{v_2}=1$. Then $\overline{v_1} +  v_2v_1v_2 \in v_2R$.
\end{lemma}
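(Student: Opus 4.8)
The plan is to use the hypothesis $q(v_2)=1$ to make $v_2$ invertible in $A$, and then to detect membership in $v_2R$ by left-multiplying by $v_2^{-1}=\overline{v_2}$. From the embedding relations $q(v_2)=v_2\overline{v_2}=\overline{v_2}v_2$ together with $q(v_2)=1$, the element $\overline{v_2}$ is a two-sided inverse of $v_2$; and by Theorem~\ref{thm1}, $v_2v_1v_2\in V$, so $x:=\overline{v_1}+v_2v_1v_2$ lies in $V$. Since $v_2$ is a unit, $x\in v_2R$ if and only if $\overline{v_2}\,x\in R$, and
\[
\overline{v_2}\,x=\overline{v_2}\,\overline{v_1}+(\overline{v_2}v_2)v_1v_2=\overline{v_2}\,\overline{v_1}+v_1v_2.
\]
Thus the lemma reduces to the single claim that $\overline{v_2}\,\overline{v_1}+v_1v_2\in R$.

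To prove this claim I would polarize the defining identity of the embedding: expanding $\overline{(x+y)}(x+y)=q(x+y)$ and using $\overline{w}w=q(w)$ gives
\[
\overline{x}\,y+\overline{y}\,x=\langle x,y\rangle\in R\qquad\text{for all }x,y\in V.
\]
Applying this with $x=\overline{v_1}\in V$ and $y=v_2$, and using that the isometry $v\mapsto\overline{v}$ has order $2$ under the standing hypotheses $1_A\in V$, $\overline{1_A}=1_A$ (so that $\overline{\overline{v_1}}=v_1$), yields $v_1v_2+\overline{v_2}\,\overline{v_1}=\langle\overline{v_1},v_2\rangle\in R$, which is the claim. (Alternatively one could substitute $\overline{v_i}=\langle v_i,1\rangle-v_i$ and expand everything out, but the polarization route is shorter.) Hence $\overline{v_2}\,x=r$ for some $r\in R$, and multiplying on the left by $v_2$ and using $v_2\overline{v_2}=1$ gives $x=v_2r\in v_2R$, as required.

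I do not expect a genuine obstacle here: the one idea is to left-multiply by the unit $\overline{v_2}$, after which the term $v_2v_1v_2$ collapses and what remains, $\overline{v_2}\,\overline{v_1}+v_1v_2$, is transparently a polarization of the norm. Everything else is routine bookkeeping with $\overline{w}w=q(w)$, $\overline{\overline{w}}=w$, and the definition of the bilinear form.
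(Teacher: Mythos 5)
Your proof is correct and is essentially the paper's argument run in reverse: the paper polarizes $q$ to get the scalar $\overline{v_1}\,\overline{v_2}+v_2v_1=\langle\overline{v_1},v_2\rangle\in R$ and multiplies it on the right by $v_2$ to produce $\overline{v_1}+v_2v_1v_2$, while you multiply the target on the left by $\overline{v_2}=v_2^{-1}$ to collapse it to the same polarization scalar. Both versions use the same ingredients (the identity $\overline{x}y+\overline{y}x=\langle x,y\rangle$, $\overline{v_2}v_2=1$, and $\overline{\overline{v_1}}=v_1$ from the standing hypotheses of this section), so there is no substantive difference.
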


\begin{proof}
Since  $(\overline{v_1} +v_2)(v_1 + \overline{v_2}) \in R$, it follows that
\[ \langle \overline{v_1}, {v_2}\rangle =  \overline{v_1}\cdot \overline{v_2} + v_2v_1  \in R.\] Multiplying by $v_2$ on the right, this implies that 
\[\overline{v_1} +  v_2v_1v_2 \in v_2R. \qedhere \]
\end{proof}

\vskip 3mm
\begin{lemma}\label{lemma3}
Let $g\in G(A)$. If $q(gg^*) =1$, then $q(g^*g)  =1.$
\end{lemma}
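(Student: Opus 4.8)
The plan is to identify $\overline{g^{*}g}$ through the characterisation of the conjugate given in Lemma~\ref{lemma1}, rather than computing it head-on. Write $a=gg^{*}$ and $b=g^{*}g$. Since $1_{A}\in V$ and $G(A)$ is a group which is closed under $*$, each of $g$, $g^{*}$ and $g^{-1}$ lies in $G(A)$, and applying the $\bullet$-action to $1_{A}$ produces
\[ a=g\bullet 1_{A}\in V,\qquad b=g^{*}\bullet 1_{A}\in V,\qquad b^{-1}=g^{-1}(g^{-1})^{*}=g^{-1}\bullet 1_{A}\in V; \]
the third membership is the key point. The hypothesis $q(gg^{*})=1$ says $a\bar a=\bar a a=1_{A}$, i.e.\ $\bar a=a^{-1}$, and $a+\bar a=\langle 1_{A},a\rangle$ is a scalar (equivalently $(1_{A}+a)\overline{(1_{A}+a)}\in R$).

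Next I would transport these two facts about $a$ to $b$ using the conjugacy $b=g^{-1}ag$, so that $b^{-1}=g^{-1}a^{-1}g=g^{-1}\bar a\,g$. Since $a+\bar a$ is a central scalar,
\[ b+b^{-1}=g^{-1}(a+\bar a)g=a+\bar a\in R,\qquad\text{and}\qquad b\cdot b^{-1}=1_{A}\in R. \]
Thus $b,b^{-1}\in V$, their sum is a scalar and their product is a scalar, so — assuming $\{b,1_{A}\}$ is linearly independent — Lemma~\ref{lemma1} applied with $v=b$, $v'=b^{-1}$ gives $b^{-1}=\bar b$. Hence $q(g^{*}g)=q(b)=b\bar b=b\,b^{-1}=1$, as required.

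The remaining point is the hypothesis in Lemma~\ref{lemma1} that $\{b,1_{A}\}$ be linearly independent, and this is the only step I expect to cost any extra work. If $\{b,1_{A}\}$ is linearly dependent, then $b=\lambda 1_{A}$ for some $\lambda\in R$ (here one uses that $b$ is invertible), and then $g^{*}g=\lambda 1_{A}$ forces $g^{*}=\lambda g^{-1}$, hence $gg^{*}=\lambda 1_{A}=b$ and $q(b)=q(gg^{*})=1$ directly. Granting this, the proof is just the short chain of identities above; its only substantive ingredients are the two observations that $b^{-1}$ again lies in $V$ and that conjugating the central scalar $a+\bar a$ by $g$ returns $b+b^{-1}$.
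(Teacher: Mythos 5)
Your proof is correct, and it takes a genuinely shorter route than the paper's. The paper also reduces the statement to Lemma~\ref{lemma1} applied to $X=g^*g$ and $X^{-1}$, but it produces the key fact $X+X^{-1}\in R$ by a detour through Lemma~\ref{lemma2}: it writes $X^{-1}=g^*\bullet(g^{*-1}g^{-1})^2$, notes $q(g^{*-1}g^{-1})=q(\overline{gg^*})=1$, and uses Lemma~\ref{lemma2} to get $1+(g^{*-1}g^{-1})^2\in g^{*-1}g^{-1}R$ before collapsing everything with the $\bullet$-action. You bypass Lemma~\ref{lemma2} entirely: since $q(a)=1$ gives $\bar a=a^{-1}$, and $a+\bar a\in R$ is already known from $1_A\in V$ (Section 3.2), conjugation by $g$ immediately yields $b+b^{-1}=g^{-1}(a+\bar a)g=a+\bar a\in R$; your observation that $b^{-1}=g^{-1}\bullet 1_A\in V$ (using that $G(A)$ is a group closed under $*$) is exactly what makes Lemma~\ref{lemma1} applicable, a membership the paper secures instead through the Jordan-product identity and the $\bullet$-action. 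The tradeoff: your argument is more transparent and isolates the actual mechanism (conjugacy of $gg^*$ and $g^*g$ together with the scalar trace $a+\bar a$), while the paper's version rehearses the Lemma~\ref{lemma2} computation that it reuses almost verbatim in the proof of Lemma~\ref{main}. One small caveat: your fallback case assumes that linear dependence of $\{b,1_A\}$ forces $b=\lambda 1_A$ with $\lambda\in R$, which over a general commutative ring does not follow from dependence alone (nor from invertibility of $b$); but this is precisely the same level of precision as the paper, which conflates ``$X\notin R$'' with ``$\{X,1\}$ linearly independent,'' so your treatment is no weaker than the original, and when $b=\lambda 1_A$ your direct computation $gg^*=\lambda=g^*g$ is the same as the paper's.
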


\begin{proof}
Let $X = g^*g$. 
\vskip 1mm 
If $X \in R$, then $gg^* = g^*g $ and we are done. 
Suppose $X \notin R$. 
We will show that $X + X^{-1} \in R$ and infer from Lemma $~\ref{lemma1}$ that $X^{-1} = \overline{X}$. 
Now, \[X^{-1} = g^{-1}g^{*{-1}} = g^* \bullet (g^{*{-1}} g^{-1})^2.\]
Since $q(gg^*) = 1$, we have $q(g^{*{-1}} g^{-1}) = q(\overline{gg^*})=1.$
Therefore (using Lemma $~\ref{lemma2}$),
\begin{equation*}
\begin{split}
X + X^{-1} & = g^* \bullet (1 + (g^{*{-1}} g^{-1})^2 ) \\
 & = g^* \bullet (rg^{*{-1}} g^{-1})  \text{,    \hskip 5mm         for some $r\in R$.} \\
 &= r \in R  
  \qedhere
 \end{split}
\end{equation*}
\end{proof}
\vskip 5mm

\begin{lemma} \label{main} 
Let $g \in G(A)$. For all $v \in V$, we have $$q(g\bullet v) = q(gg^*)q(v).$$
\end{lemma}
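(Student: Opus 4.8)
The plan is to reduce the claimed multiplicativity $q(g\bullet v) = q(gg^*)q(v)$ to the case where $v$ and the identity $1_A$ are linearly independent, and then exploit the structure of $Cl$ as a subalgebra of $M_2(A)$ together with the preceding lemmas. First I would dispose of the degenerate case: if $v \in R\cdot 1_A$, say $v = r$, then $g\bullet v = g(r)g^* = r\,gg^*$, and since $q$ is a quadratic form and $gg^*$ lies in $R$ (because $g \in G(A)$ forces $gg^* = g\bullet 1 \in V$, and $gg^*$ is fixed by $*$, so by the first bullet of Section 4 it is a scalar — actually more directly $q(gg^*) = gg^*\overline{gg^*}$ and $gg^* + \overline{gg^*}\in R$), we get $q(g\bullet v) = q(r\,gg^*) = r^2 q(gg^*) = q(gg^*)q(v)$. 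So from now on assume $\{v,1\}$ are linearly independent.

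Next I would set up the computation inside $M_2(A)$. Write $z = \bigl(\begin{smallmatrix} 0 & v \\ \overline{v} & 0\end{smallmatrix}\bigr) \in V_{Cl}$ and $G = (g, g^{*-1}) \in U^0$; conjugation by $G$ sends $z$ to $\bigl(\begin{smallmatrix} 0 & g\bullet v \\ g^{*-1}\overline{v}g^{-1} & 0\end{smallmatrix}\bigr)$, which by hypothesis $g \in G(A)$ has the form $\bigl(\begin{smallmatrix} 0 & w \\ \overline{w} & 0\end{smallmatrix}\bigr)$ with $w = g\bullet v \in V$. The key identity is that conjugation in $Cl$ preserves squares: $(GzG^{-1})^2 = G z^2 G^{-1} = G\, q(v)\, G^{-1} = q(v)$. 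On the other hand, squaring $\bigl(\begin{smallmatrix} 0 & w \\ \overline{w} & 0\end{smallmatrix}\bigr)$ gives $(w\overline w,\ \overline w w)$, so we need $w\overline{w} = \overline{w}w = q(v)$ — but wait, this only re-proves $w \in V$ with $q(w) = q(v)$, which is false in general; the matrix $\bigl(\begin{smallmatrix} 0 & w \\ \overline{w} & 0\end{smallmatrix}\bigr)$ is the image of $w_{Cl}$ only when the off-diagonal entries are genuinely $w, \overline w$, and the lower entry here is $g^{*-1}\overline v g^{-1}$, which must therefore equal $\overline{g\bullet v} = \overline w$ — and indeed $q(w) = w \cdot g^{*-1}\overline v g^{-1}$. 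Expanding: $q(g\bullet v) = (g v g^*)(g^{*-1}\overline v g^{-1}) = g v \overline v g^{-1} = g\, q(v)\, g^{-1} = q(v)$?? That cannot be right either, since it would give trivial norm. The resolution, and the real content, is that $g^{*-1}\overline v g^{-1}$ is NOT $\overline{g\bullet v}$; rather $\overline{g\bullet v}$ is determined by Lemma \ref{lemma1} applied to $w = g\bullet v$. So the correct route is: show $w + (g g^* \overline v g^* )/(\text{something})$...

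Let me restate the approach cleanly. The honest plan: compute $w\overline{w}$ where $w = gvg^*$ and $\overline{w}$ is the image-isometry applied to $w$. Use Lemma \ref{lemma2} (with a suitable normalization) and Lemma \ref{lemma3} to identify $\overline{w}$ in terms of $g$, $v$, $gg^*$. Concretely, I expect one shows $\overline{gvg^*} = (gg^*)^{-1}\,g\,\overline v\, g^*\cdot$(scalar) — or better, that $g^*\overline{gvg^*}g = q(gg^*)\,\overline v$ — after which $q(g\bullet v) = (gvg^*)\overline{gvg^*} = g v \bigl(g^* \overline{gvg^*} g\bigr) g^{-1} \cdot (\text{adjust}) = g v \cdot q(gg^*)\overline v \cdot g^{-1} = q(gg^*)\, g\, q(v)\, g^{-1} = q(gg^*)q(v)$, using that $q(v) \in R$ commutes with $g$. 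The main obstacle — and where Lemmas \ref{lemma1}–\ref{lemma3} earn their keep — is precisely the identification of the isometry $\overline{w}$ for $w = g\bullet v$: one knows $w\in V$, but one must pin down $\overline w$ using the characterization "$v + \overline v$ and $v\overline v$ scalars" (Lemma \ref{lemma1}), after reducing to the case $\{w,1\}$ independent, and the scalar $q(gg^*)$ must be shown to be the right multiplier via the symmetry $q(gg^*) = q(g^*g)$ from Lemma \ref{lemma3}. I would organize it as: (i) handle $v \in R$; (ii) assume $\{v,1\}$ and likewise (after the fact) $\{g\bullet v, 1\}$ independent; (iii) use Lemma \ref{lemma2} to express $\overline{g\bullet v}$ modulo a scalar multiple of $g$-terms; (iv) use Lemma \ref{lemma1} to make that expression exact; (v) assemble, invoking Lemma \ref{lemma3} to handle the $gg^*$ versus $g^*g$ discrepancy and that $q(gg^*)$ is a non-zero divisor or at least commutes appropriately. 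The delicate point throughout is bookkeeping the left/right placement of the non-commuting factors $g, g^*$ and their inverses, which is why a basis-free argument via the embedding in $M_2(A)$ is cleaner than coordinates.
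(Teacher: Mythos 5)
Your high-level strategy is the paper's strategy: the content of the lemma is to identify $\overline{g\bullet v}$, and this is done by exhibiting a candidate $w'$ with $w+w'$ and $ww'$ scalar and invoking Lemma~\ref{lemma1}, with Lemmas~\ref{lemma2} and~\ref{lemma3} supplying the scalar condition; your guessed identity $g^*\,\overline{gvg^*}\,g = q(gg^*)\overline{v}$ is indeed equivalent to what gets proved. But what you have written is a plan, not a proof, and the two steps that carry all the weight are missing. First, the normalization: Lemma~\ref{lemma2} needs a norm-one element and Lemma~\ref{lemma3} is only stated under the hypothesis $q(gg^*)=1$, so you cannot ``invoke Lemma~\ref{lemma3} to handle the $gg^*$ versus $g^*g$ discrepancy'' for general $g$. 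The paper first reduces to the case $q(gg^*)=1$ by replacing $g$ with $xg$ for a scalar $x$ with $x^2=q(gg^*)^{-1}$, passing if necessary to $R[x]/(x^2-q(gg^*)^{-1})$ and noting that all terms of the desired identity lie in $R$; your proposal never addresses this reduction, and without it steps (iii)--(v) of your outline have no tool to run on.

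Second, the actual computation identifying $\overline{w}$ is absent. In the normalized case the paper sets $X=g^*g$ (norm $1$ by Lemma~\ref{lemma3}), $w=g\bullet v$, and $w'=g^{*-1}\bullet\overline{v}$; then $ww'=gv\overline{v}g^{-1}=q(v)$ is immediate, and the key trick is the rewriting $w=gvg^*=g^{*-1}(XvX)g^{-1}=g^{*-1}\bullet(XvX)$, so that Lemma~\ref{lemma2} (applied with $v_2=X$, which lies in $V$ since $G(A)$ is $*$-closed and $1\in V$) gives $\overline{v}+XvX=rX$ and hence $w+w'=g^{*-1}\bullet(rX)=r\in R$; Lemma~\ref{lemma1} then yields $w'=\overline{w}$, with the linear-dependence caveat handled for $\{w,1\}$ (not $\{v,1\}$) by writing $w=(w_0+w)-w_0$ and using linearity of $\bullet$. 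Your step (iii) (``express $\overline{g\bullet v}$ modulo a scalar multiple of $g$-terms'') is exactly this manipulation, and it is the one thing your proposal does not supply; the false start via conjugation preserving squares, which you correctly abandon, does not substitute for it. So the proposal identifies the right ingredients but has a genuine gap where the proof should be.
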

\vskip 2mm

\begin{proof}
\textbf{Case 1 : $q(gg^*) = 1.$}
\vskip 2mm
Let $w = g\bullet v$ and  $w' = g^{*{-1}}\bullet \overline{v}$. Since $w\cdot w' = v\overline{v}$, it is enough to prove that $\overline{w} = w'$.
\vskip 1mm
Let us assume for now that  $\{w,1\}$ are linearly independent. We will first show that $w +w' \in R$ and use it to prove that $w' = \overline{w}$.  
\vskip 1mm
Let $X=g^*g$. Since $q(gg^*) = 1$, it follows from Lemma ~\ref{lemma3} that $q(X)  =1$. We have
\begin{equation*}
\begin{split}
w &= gvg^* \\
&=  g^{*{-1}}(XvX)g^{-1} \\
&= g^{*{-1}} \bullet (XvX).\\
 \end{split}
\end{equation*}
\vskip 1mm

Since $q(X) = 1$, we know (from Lemma $~\ref{lemma2}$) that
$\overline{v} + XvX = rX$ for some $r \in R$. Therefore
\begin{equation*}
\begin{split}
w' + w & = g^{*{-1}} \bullet (\overline{v} + XvX) \\
 & = g^{*{-1}} \bullet rg^*g \\
 &= r \in R
 \end{split}
\end{equation*}

Since $w' + w$ and $ww'$ are scalars, it follows from Lemma ~\ref{lemma1} that $$w' = \overline{w}.$$  
Now suppose $\{w,1\}$ are linearly dependent. Then we can write $w = (w_0+w) - w_0$, where  $\{w_0,1\}$ are linearly independent. As $\bullet$ is a linear action it follows that $w' = \overline{w}$.

\vskip 2mm
\textbf{Case 2 : $q(gg^*) = a.$}
\vskip 1mm
Clearly $a$ is invertible since $g \in G(A)$. 

Suppose there is an $x \in R$ such that $x^2 = a^{-1}$. Take $h = xg$. Then $ q(h\bullet v) = x^2\cdot q(g\bullet v) $ and  $q(hh^*) =1$. The result follows immediately from Case 1.
\vskip 2mm
Now suppose $x^2 = a^{-1}$ has no solutions in $R$. Then one has the identity $q(g\bullet v) = q(gg^*)q(v)$ over the ring $\frac{R[x]}{(x^2 - a^{-1})}$; Since each term of the equation 
lies in $R$, the result follows in this case too.
\end{proof}

\vskip 2mm

We can now describe the Spin group as a group inside $A$. Let $R^\times$ denote the group of invertible elements in $R$.

\begin{thm}\label{main2}
Define $d : G(A) \rightarrow R^\times$ as $d(g) = q(gg^*).$ 
\vskip 1mm
Then $d$ is a group homomorphism and \[ ker (d) = SG(A) \cong Spin(V). \] 
\end{thm}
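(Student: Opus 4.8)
The plan is to unpack the statement into four stages: (i) $d$ is a well-defined group homomorphism $G(A)\to R^\times$, whence its kernel $SG(A)=\{g\in G(A):q(gg^*)=1\}$ is a subgroup; (ii) $\chi\colon Spin(V)\to A,\ (g,g^{*-1})\mapsto g$, is an injective group homomorphism; (iii) $\operatorname{im}\chi\subseteq SG(A)$; and (iv) $\chi$ maps onto $SG(A)$. Stage (i) is quick: since $1_A\in V$ and $\overline{1_A}=1_A$, every $g\in G(A)$ has $gg^*=g\bullet 1_A\in V$, so $d(g)=q(gg^*)\in R$ makes sense; multiplicativity is immediate from Lemma~\ref{main}, since $d(gh)=q(ghh^*g^*)=q\bigl(g\bullet(hh^*)\bigr)=q(gg^*)q(hh^*)=d(g)d(h)$, using $hh^*\in V$; and as $d(1_A)=q(1_A)=1$, each $d(g)$ is a unit of $R$ with inverse $d(g^{-1})$. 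So $d$ is a homomorphism into $R^\times$, and by definition $\ker d=SG(A)$.

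Stages (ii) and (iii) are bookkeeping. The map $\chi$ is injective because $g$ determines the pair $(g,g^{*-1})$, and multiplicative because $(g_1,g_1^{*-1})(g_2,g_2^{*-1})=(g_1g_2,(g_1g_2)^{*-1})$. If $(g,g^{*-1})\in Spin(V)$, then conjugating the generic element $\bigl(\begin{smallmatrix}0&v\\\overline v&0\end{smallmatrix}\bigr)$ of $V_{Cl}$ by it lands again in $V_{Cl}$; this forces $gvg^*\in V$ for every $v$, so $g\in G(A)$ (invertibility of $g$ coming from $z=(g,g^{*-1})\in Cl_0^\times\subseteq M_2(A)^\times$), and taking $v=1_A$ and comparing the two off-diagonal entries gives $g^{*-1}g^{-1}=\overline{gg^*}$, whence $q(gg^*)=(gg^*)\overline{gg^*}=(gg^*)(g^{*-1}g^{-1})=1$, i.e.\ $g\in SG(A)$. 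Thus $\chi\colon Spin(V)\to SG(A)$ is an injective homomorphism.

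Stage (iv), surjectivity, is the heart of the proof. Fix $g\in SG(A)$ and put $z=(g,g^{*-1})\in M_2(A)^\times$. Conjugation by $z$ carries $\bigl(\begin{smallmatrix}0&v\\\overline v&0\end{smallmatrix}\bigr)$ to $\bigl(\begin{smallmatrix}0&g\bullet v\\g^{*-1}\overline v g^{-1}&0\end{smallmatrix}\bigr)$, where $g\bullet v\in V$ because $g\in G(A)$, and $g^{*-1}\overline v g^{-1}=\overline{g\bullet v}$ — the latter identity being exactly what is established inside Case~1 of the proof of Lemma~\ref{main}, which applies since $q(gg^*)=1$. Hence conjugation by $z$ maps $V_{Cl}$ into $V_{Cl}$, and doing the same for $g^{-1}\in SG(A)$ shows it is a bijection of $V_{Cl}$; since $V_{Cl}$ generates $Cl$, conjugation by $z$ then restricts to a graded automorphism of $Cl$. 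Granting that $z$ itself lies in $Cl_0$, we are finished: $z^*=(g^{-1},g^*)=z^{-1}$, so $zz^*=1$ for free, hence $z\in U^0(V)$, and with the previous sentence $z\in Spin(V)$ and $\chi(z)=g$; so $\chi$ is onto $SG(A)$, and combining with stages (i)--(iii) yields $\ker d=SG(A)\cong Spin(V)$.

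The step I expect to be the main obstacle is precisely the claim that $z\in Cl_0$. The natural route is through $u:=gg^*=g\bullet 1_A\in V$: since $q(u)=1$ we have $u^{-1}=\overline u\in V$, and from $gg^*=u$ we get $g^{*-1}=\overline u\,g$, so $z=(1_A,\overline u)(g,g)$; one then has to realise the relevant factors as products of the generators $\bigl(\begin{smallmatrix}0&v\\\overline v&0\end{smallmatrix}\bigr)$ ($v\in V$) and $1_{Cl}=\bigl(\begin{smallmatrix}0&1_A\\1_A&0\end{smallmatrix}\bigr)$ of $Cl$ inside $M_2(A)$. This is where one must use how $Cl$ actually sits inside $M_2(A)$ — in particular that $V$ generates $A$ (equivalently, that $A$ has been replaced by the subalgebra generated by $V$); without such a hypothesis $z$ need not lie in $Cl$ at all, so one cannot expect $\ker d$ by itself to coincide with $\chi(Spin(V))$ in complete generality.
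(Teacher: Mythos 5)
Your stage (i) is, in substance, the paper's entire written proof: the author verifies $d(gh)=q(ghh^{*}g^{*})=q(gg^{*})q(hh^{*})$ via Lemma~\ref{main} and then simply asserts $\ker(d)=SG(A)\cong Spin(V)$. So your stages (ii)--(iv) supply material the paper does not. Stages (ii) and (iii) are correct (conjugating the image of $1_A$ and reading off $w=gg^{*}$, $\overline{w}=g^{*-1}g^{-1}$, hence $q(gg^{*})=1$, is exactly the right computation), and in (iv) you correctly reduce surjectivity of $\chi$ onto $\ker d$ to the single claim $z=(g,g^{*-1})\in Cl_{0}$, the identity $g^{*-1}\overline{v}g^{-1}=\overline{g\bullet v}$ being available from Case~1 of the proof of Lemma~\ref{main}. (One small slip in your sketch: $\bigl(\begin{smallmatrix}0&1_A\\1_A&0\end{smallmatrix}\bigr)$ is the image of $1_A\in V_{Cl}$, not $1_{Cl}$, which is the identity matrix.)

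The obstacle you flag is genuine, and the paper never addresses it: nothing in the standing hypotheses of Section~4 forces $(g,g^{*-1})$ into $Cl_{0}$, and without something like the ``$A\subseteq Cl$'' condition of Section~3.2 (equivalently, replacing $A$ by the subalgebra generated by $V$, as holds in all the paper's examples) the asserted isomorphism can actually fail. For instance, take $A=M_{2}(\mathbb{R})$ with the transpose involution, $V=\mathbb{R}\cdot 1_A$, $q(x\cdot 1_A)=x^{2}$, $\overline{v}=v$: hypotheses (1) and (2) hold, $G(A)$ is the group of orthogonal similitudes, and $\ker d=O_{2}(\mathbb{R})$, whereas $Cl_{0}=\mathbb{R}$ gives $Spin(V)=\{\pm 1\}$. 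So your proof is incomplete exactly where the theorem itself is: to finish stage (iv) one must add a hypothesis guaranteeing $z\in Cl_{0}$ and then actually verify it --- your factorization $z=(1,\overline{u})(g,g)$ does not yet do this, since $(g,g)\in Cl$ is precisely the $A\subseteq Cl$ condition. Relative to the paper you have lost nothing; you have located the unproved step that the author's one-line assertion glosses over, and any honest proof of Theorem~\ref{main2} must either restrict to the subalgebra of $A$ generated by $V$ or impose an equivalent hypothesis.
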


\begin{proof}
As a consequence of Lemma ~\ref{main} we have, for $g,h \in G(A)$,
\[d(gh) = q(ghh^*g^*) = q(gg^*)q(hh^*) = d(g)d(h).\] Thus $d$ is a group homomorphism and
\[
 ker (d) = SG(A) \cong Spin(V)
 \]
 in the case when the identity map on $V$ can be lifted to an involution of $A$.
\end{proof}
\vskip 4mm


\section{{The Suslin embedding}}
\vskip  2mm

Let $R$ be any commutative ring and $H(R^n) := R^n \oplus R^{n*}$. By fixing a basis of $R^n$, 
one can then write the quadratic form on $H(R^n)$ as 
$$q(v,w) = v\cdot w^T = a_1b_1 + \cdots + a_nb_n.$$
for $v =(a_1,\cdots,a_n)$, $w =(b_1,\cdots,b_n)$. This quadratic space $(H(R^n), q)$ is referred to as the hyperbolic space. We will now define Suslin matrices which give an embedding of the hyperbolic space into the ring of matrices $M_{2^{n-1}}(R)$.

\vskip 1mm

The Suslin matrix $S_n(v,w)$ of size 
$2^n \times 2^n$
is constructed 
from two vectors $v,w$ in $R^{n+1}$ as follows : 
\vskip 2mm
Let $v = (a_0, v_1)$, $w = (b_0, w_1)$ where $v_1$, $w_1$ are vectors in $R^n$. Define
{
\[  S_1(v,w) =  \begin{pmatrix}
a_0     &v_1    \\
-w_1    & b_0 \end{pmatrix} \hskip 5mm   \overline{S_1(v,w)} =  \begin{pmatrix}
b_0     & -v_1    \\
w_1    & a_0 \end{pmatrix}   \]

 \[ 
 S_n(v,w) =  \begin{pmatrix}
a_0I_{2^{n-1}}      &  S_{n-1}(v_1,w_1) \\
-\overline{S_{n-1}(v_1,w_1)}      &  b_0I_{2^{n-1}}\end{pmatrix}\]
and 
\[\overline{S_n(v,w)} =  \begin{pmatrix}
b_0I_{2^{n-1}}      &  - S_{n-1}(v_1,w_1) \\
\overline{S_{n-1}(v_1,w_1)}      &  a_0I_{2^{n-1}}\end{pmatrix}\]

It easily follows that $S_n = S_n(v,w)$ satisfies the following properties: 

{\small
\begin{enumerate}
\item $S_n\overline{S_n} = \overline{S_n}S_n = (v\cdot w^T) I_{2^n}$,
\item $\det S_n = (v \cdot w^T)^{2^{n-1}}$, for $n \geq 1$.
\end{enumerate}
}

\noindent In his paper ~\cite{S}, A. Suslin then describes a sequence of matrices $J_n \in M_{2^n}(R)$ such that $JJ^T = I$,
\begin{equation}\label{eq1}
 J_{n-1}S_{n-1}^TJ_{n-1}^T = 
\begin{cases}
S_{n-1} &\text{ for $n$ odd,}\\\\
\overline{S_{n-1}} &\text{ for $n$ even.}
 \end{cases}
\end{equation}
Clearly $M^* =JM^TJ^T$ is an involution of $M_{2^n}(R)$ (as $JJ^T =1$).Thus there are two types of involution for the Suslin embedding, depending on the parity of $rank(V) = n$.
\vskip 1mm
The map
$\phi : H(R^n) \rightarrow M_{2^n}(R)$ defined by 
$\phi(v,w) =  \bigl(\begin{smallmatrix}
0                  & S_{n-1}(v,w) \\
\overline{S_{n-1}(v,w)}         & 0 \end{smallmatrix}\bigr)$ 
induces an $R$-algebra homomorphism $\phi : Cl \rightarrow M_{2^n}(R)$.
In fact $\phi$ is an isomorphism (Section 3.1, ~\cite{CV}); the elements $\phi(v,w)$ give a set of generators of the Clifford algebra.

\begin{rmk}\label{remark2}
There cannot be two involutions $*_1, *_2$ of  $M_{2^n}(R)$ (for a fixed $n$) such that $S^{*_1} = S$ and $S^{*_2} = \overline{S}$. Otherwise both involutions can be lifted to the standard involution as in Table ~\ref{table1}.
This is not possible as the two (lifted) involutions act differently on 
$\big(\begin{smallmatrix}
S(v,w)                & 0\\
0        & S(v,w) \end{smallmatrix}\bigr) \in Cl $, when $\overline{S(v,w)} \neq S(v,w)$.
\end{rmk}

 \begin{rmk}
When $v\cdot w^T =1$, the kernel of the map $R^n \rightarrow R$, defined by $w \rightarrow  v\cdot w^T$, is a projective module. This projective module is not isomorphic to its dual when the row $v$ has odd size $>3$. However, these projective modules are self-dual when $n$ is even. (See  ~\cite{NRS}).
 
 Perhaps this difference in \emph{duality} in the odd and even cases can give a deeper explanation for the corresponding behavior of the Suslin Matrices, described above in Equation ~\ref{eq1}. 
 \end{rmk}

To learn more about the connection between Suslin Matrices and Clifford Algebras, see ~\cite{CV}. Suslin matrices were first introduced by A. Suslin in his paper ~\cite{S},  in connection with unimodular rows and K-Theory.  The recent work of A. Asok and J. Fasel (see ~\cite{AF}) uses Suslin Matrices in the context of $\mathbb{A}^1$-homotopy theory and Bott periodicity. 

\subsection{Applications to Quadratic Spaces} In a similar fashion, using Suslin Matrices, one can construct an explicit set of generators of the Clifford Algebra for other classes of quadratic spaces.  Here are a few examples :
 \vskip 2mm
 \noindent
 {\small
\begin{tabular}{ |p{4.5cm}|p{5.7cm}|p{2.2cm}| }

 \hline
  Quadratic Space $(V,q)$ & Clifford Embedding &$Cl$ \\
 \hline
 &&\\
   $(R^{2n}, \sum\limits_{i=1}^{n} v_iw_i$)&   $ (v,w) \rightarrow \Bigl(\begin{smallmatrix}
   0    & S(v,w)\\\\
\overline{S(v,w)}         & 0\end{smallmatrix}\Bigr)$ & $M_{2^n}(R)$\\
 &&\\
   $(R^{2n+1}, -v_0^2 + \sum\limits_{i=1}^{n} v_iw_i$)&   $ (x,v,w) \rightarrow \Bigl(\begin{smallmatrix}
v_0\lambda_1         & S(v,w)\\\\
\overline{S(v,w)}         & -v_0\lambda_1 \end{smallmatrix}\Bigr)$ & $M_{2^n}(R[\lambda_1])$\\
&&\\

 $(R^{2n+2}, -v_0^2 - w_0^2 + \sum\limits_{i=1}^{n} v_iw_i)$&   $(x,y,v,w) \rightarrow \Bigl(\begin{smallmatrix}
v_0\lambda_1 +w_0\lambda_2         & S(v,w)\\\\
\overline{S(v,w)}         & -v_0\lambda_1 - w_0\lambda_2 \end{smallmatrix}\Bigr)$ & $M_{2^n}(R[\lambda_1,\lambda_2])$\\\
&& \\
\hline
\end{tabular}
\vskip 1mm
(In the above table, we have $\lambda_1^2 =\lambda_2^2 =-1$ and $\lambda_1\lambda_2+\lambda_2\lambda_1=0$.)
}

\section{{Conclusion}}

We have analyzed general embeddings of quadratic spaces by connecting them to their respective Clifford algebras. 
Given an embedding $(V,q) \subseteq A$, we know that $vwv \in V$ and so $V$ becomes a Quadratic Jordan algebra. It would be very interesting to learn the conditions under which the identity map on $V \subseteq A$ (or more generally, maps of the type $v^*= uv$, $u\in R$)  can be lifted to an involution of $A$.  We know that this is not always the case for Suslin matrices (see Remark ~\ref{remark2}). 

The classification and structure of Special Jordan algebras has been worked out in the 20th century (see ~\cite{M} for a survey). But their relationship with the overlying associative algebras, in different embeddings, remains to be explored further.

\vskip 3mm

\end{document}